\newtheorem{theorem}{Theorem}[section]
\newtheorem{lemma}[theorem]{Lemma}
\newtheorem{conjecture}[theorem]{Conjecture}
\newcommand{\e}{\varepsilon}
\newcommand{\f}{\frac}
\newcommand{\wh}{\widehat}
\newcommand{\Id}{{\bf 1}}
\begin{document}

\title{A remark on the mean value inequalities related to Waring's problem}
\author{Xiaochun Li}

\address{
Xiaochun Li\\
Department of Mathematics\\
University of Illinois at Urbana-Champaign\\
Urbana, IL, 61801, USA}

\email{xcli@math.uiuc.edu}

\date{}

\begin{abstract}
Assuming some pointwise estimates on certain Weyl's sum, 
we prove the sharp estimates of the mean value associated to the following exponential sum 
$$
\sum_{n=1}^N 
e^{2\pi i tn^d +2\pi i xn}\,.
$$ 
\end{abstract}

\maketitle

\section{Introduction}
\setcounter{equation}0

It is conjectured in number theory that for any $\e>0$, there is a constant $C_{d, \e}>0$ obeying 
\begin{equation}\label{eqn1}
\bigg| \sum_{n=1}^N e^{ 2\pi i P(n)} \bigg|\leq C_{d, \e} N^{1+\e} \bigg(\frac{1}{q} + \frac{q}{N^d} \bigg)^{\frac{1}{d}}\,,
\end{equation}
for any $N\in\mathbb N$, if $P(x)=\sum_{j=1}^d \alpha_j x^j$ with $\big| \alpha_d -a/q\big|\leq 1/q^2$ and $(a, q)=1$. 
It turns out that it is very challenging and even a small improvement of existing bounds would be interesting. 
Weyl's differencing method yields a non-trivial estimate, 
\begin{equation}\label{eqn2}
\bigg| \sum_{n=1}^N e^{ 2\pi i P(n)} \bigg|\leq C_{d, \e} N^{1+\e}\bigg( \frac{1}{q}+\frac{1}{N}+\frac{q}{N^d}\bigg)^{\frac{1}{2^{d-1}}}\,,
\end{equation}
if the leading coefficient $\alpha_d$ of the polynomial $P$ satisfies $|\alpha_d-a/q|\leq 1/q^2$ with $(a,q)=1$. 
An exciting breakthrough was made by Vinogradov \cite{Vino}, who noticed that the pointwise problem can be reduced to the following mean value estimations, for $p>d(d+1)$,  
\begin{equation}\label{eqn3}
\int_{\mathbb T^d} \bigg|\sum_{n=1}^N a_n e^{2\pi i(x_1n+\cdots + x_d n^d)}\bigg|^{2p} dx\lesssim 
N^{p-d(d+1)/2} \bigg(\sum_{n=1}^N \big|a_n\big|^2\bigg)^{\f12}\,.
\end{equation}
Vinogradov  proved a result that is slightly weaker than (\ref{eqn3}) but enough to strengthen the 
Weyl estimates (\ref{eqn2}).  
The complete solution to the Vinogradov mean value theorem was finally accomplished   
by Bourgain, Demeter and Guth \cite{BDL},  using the decoupling theory,  and by Wooley \cite{Woo-2} via the efficient congruencing method.  Following from (\ref{eqn3}),  one obtains that 
\begin{equation}\label{eqn4}
\bigg| \sum_{n=1}^N e^{ 2\pi i P(n)} \bigg|\leq C_{d, \e} N^{1+\e}\bigg( \frac{1}{q}+\frac{\log q}{N}+\frac{q\log q}{N^d}\bigg)^{\frac{1}{d(d-1)}}\,,
\end{equation}
where the leading coefficient of $P$ obeys the same condition as in  (\ref{eqn2}). 
(\ref{eqn4}) improves Weyl's estimates  for $d\geq 6$. The Vinogradov method becomes powerless 
when $q$ stays between $N^{d-1}$ and $N^d$.  The true magnitude of the Weyl sum remains mysterious. 
Methods available in the study of the exponential sums are limited. Each of them has its own weakness. 
It is hard to believe that (\ref{eqn1})
could be completely solved by any existing method, such as iterating Process A (the differencing of Weyl and/or van der Corput) and  Process B (the Poisson summation plus the stationary phase), Bombieri-Iwaniec's method 
\cite{B-Iwan}, etc. \\

 We are particularly interested in the following weak version of the conjecture.

\begin{conjecture}\label{con-1}
Let $d\in \mathbb N$, $N\in \mathbb N$ and $q$ be a prime number lying in $[N^{d/2}, N^d]$.  If $\big|t-a/q\big| \leq 1/q^2$ and $1\leq a\leq q-1$, then  
\begin{equation}\label{eqn5}
\bigg| \sum_{n=1}^N e^{ 2\pi i t n^d + 2\pi i x n } \bigg|\leq C_{d, \e} N^\e q^{1/d}\,. 
\end{equation}
\end{conjecture}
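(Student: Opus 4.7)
The target bound $N^{\e} q^{1/d}$ coincides with what the main conjecture (\ref{eqn1}) predicts in the range $q \in [N^{d/2}, N^d]$, since $q/N^d$ then dominates $1/q$. No unconditional proof is within reach of existing methods, so what follows is a strategy rather than a complete argument. It combines Process B (Poisson summation plus stationary phase) with Vinogradov's mean value theorem (\ref{eqn3}), and uses the primality of $q$ to exclude arithmetic resonances in the resulting dual sum.

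Writing $t = a/q + \beta$ with $|\beta|\le 1/q^2$, I would cut off $[1,N]$ smoothly by a bump $\phi$ and apply Poisson summation:
\[
\sum_{n} \phi(n)\, e^{2\pi i(tn^d + xn)} = \sum_{k\in \ZZ} \int \phi(u)\, e^{2\pi i(tu^d + (x-k)u)}\, du.
\]
The integrand has a unique stationary point $u_k$ given by $dt u_k^{d-1} = k - x$; only the values of $k$ with $k-x\in[dt,\, dtN^{d-1}]$ contribute non-negligibly, forming an interval of length $K \approx aN^{d-1}/q$. Stationary phase represents each surviving term as $\sim (tu_k^{d-2})^{-1/2}\, e^{2\pi i \Phi(k)}$, where $\Phi(k) = -\tfrac{d-1}{d}\,(k-x)^{d/(d-1)}/(dt)^{1/(d-1)}$ plus lower-order corrections. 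This dual sum inherits a modulus-$q$ arithmetic structure from $t\approx a/q$, and is itself of Weyl type (of fractional degree $d/(d-1)$), of length $K$, with weights of size $(q/N^{d-2})^{1/2}$.

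The heart of the argument is estimating this dual sum. One may either iterate Process B to further reduce the effective degree of the phase, or apply Vinogradov's mean value theorem (\ref{eqn3}) after a smooth partition of the dual sum into polynomial Weyl pieces. A direct calculation shows that reaching $N^{\e}q^{1/d}$ requires near-square-root cancellation in the dual sum over its full length $K$. The main obstacle is the standard $\ell^{2p}\to\ell^{\infty}$ gap: (\ref{eqn3}) gives averaged control, but for a single linear parameter $x$ one loses a power of $N$, and the resulting pointwise bound is at best (\ref{eqn4}), which falls short of $q^{1/d}$ in the present range. The natural place to use that $q$ is prime is at precisely this step, via a Bombieri-Iwaniec style bilinear decomposition or a large-sieve argument over primes, to remove near-resonant linear frequencies and upgrade the mean value bound to a pointwise one. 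I expect this final bilinear closure to be the main obstacle of the program.
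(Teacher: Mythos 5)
This statement is Conjecture~\ref{con-1} in the paper; the paper does not prove it. It is explicitly left open and is used only as a \emph{hypothesis} in Theorem~\ref{thm1}, which shows that Conjecture~\ref{con-1} implies the mean value bound of Conjecture~\ref{con-2}. So there is no ``paper's own proof'' against which your argument can be judged as an alternative route.

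As for your proposal on its own terms: it is not a proof, and you say as much. The outline --- Poisson summation, stationary phase, a dual sum of fractional Weyl type over a range of length $K\sim aN^{d-1}/q$, then an attempted upgrade from the Vinogradov mean value to a pointwise bound via a bilinear or large-sieve mechanism exploiting the primality of $q$ --- is a reasonable sketch of the standard obstructions, but the final ``bilinear closure'' step that you flag is exactly where all known methods fail; the paper itself remarks that in the range $q\in[N^{d-1},N^d]$ ``the Vinogradov method becomes powerless'' and that iterating Processes A and B or the Bombieri--Iwaniec method is not believed to suffice. There is also a quantitative issue even before that step: after one application of Process B the dual sum has length $K\sim aN^{d-1}/q$, which for $q$ near $N^d$ and $a$ of moderate size is a \emph{short} sum, and stationary phase gives weights of size $(q/N^{d-2})^{1/2}$; it is not clear that square-root cancellation over $K$ terms, even if achieved, yields the target $q^{1/d}$ uniformly over $a\in[1,q-1]$ and $x\in\mathbb{T}$. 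So the plan does not pin down the arithmetic where primality is supposed to help, and the loss you identify in passing from the $\ell^{2p}$ mean value to a single $x$ is a genuine gap, not a technical nuisance. In short: the statement is a conjecture, the paper offers no proof, and your sketch does not close it either.
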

 
Because $q$ is much larger than $N$, (\ref{eqn5}) is equivalent to
\begin{equation}\label{eqn5'}
\bigg| \sum_{n=1}^N e^{ 2\pi i \frac{a}{q} n^d + 2\pi i x n } \bigg|\leq C_{d, \e} N^\e q^{1/d}\,, 
\end{equation}
where $q\in [N^{d/2}, N^d]$ and $(a, q)=1$. 
Hence, in Conjecture \ref{con-1}, we encounter an incomplete exponential sum over a finite field. The complete sums over finite fields were solved completely by Deligne \cite{De}.  However, the incomplete 
sum seems to be extremely hard, and it is not clear how to use the algebraic structure of the fields for it. 
We noticed that Conjecture \ref{con-1} leads to a large improvement on Waring's problem by a theorem stated below. \\

Analogous to the Vinogradov mean value, for any $p>0$, we define 
\begin{equation}
S(N, p):=  \int_{\mathbb T^2} \bigg| \sum_{n=1}^{N} e^{2\pi i tn^d +2\pi i xn}\bigg|^{2p} dx dt \,.
\end{equation}

It is well-known that such a mean value is closely related to the Waring problem.  
It is natural to ask if the following estimates are true for it. 

\begin{conjecture}\label{con-2}
Let $p\geq d+1$.  For any $\e>0$,  there is a constant $C_{d, p, \e}>0$ obeying 
 \begin{equation}\label{eqn8}
S(N, p) \leq C_{d, p, \e}
N^{2p-d-1+\e}\,.
\end{equation}
\end{conjecture}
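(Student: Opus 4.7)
The plan is to apply the Hardy--Littlewood circle method to the $t$-variable, with Conjecture~\ref{con-1} as the minor-arc pointwise input. First I reduce to the critical case $p=d+1$: since $|f_N(t,x)|\le N$, we have $S(N,p)\le N^{2(p-d-1)}S(N,d+1)$, so it suffices to prove $S(N,d+1)\lesssim_\varepsilon N^{d+1+\varepsilon}$. For this, I decompose $\mathbb T$ in $t$ using Dirichlet's theorem: write $\mathbb T=\mathfrak M\sqcup\mathfrak m$, where the major arcs are $\mathfrak M=\{t:q(t)<N^{d/2}\}$ and the minor arcs are $\mathfrak m=\{t:q(t)\in[N^{d/2},N^d]\}$, with $q(t)$ the Dirichlet denominator for $|t-a/q|\le 1/(qN^d)$. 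Further dyadically refine $\mathfrak m=\bigsqcup_Q\mathfrak m_Q$, so that $|\mathfrak m_Q|\lesssim Q/N^d$. On $\mathfrak m_Q$, Conjecture~\ref{con-1} (adapted to non-prime denominators at the cost of an $N^\varepsilon$ factor, via approximation by a nearby prime in the dyadic range) supplies the pointwise bound $\sup_x|f_N(t,x)|\lesssim N^\varepsilon Q^{1/d}$.

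The core step is the minor-arc estimate, obtained by Weyl differencing in $x$. Expand
\[
|f_N(t,x)|^2=\sum_{|h|<N}g_h(t)\,e^{2\pi ixh},\qquad g_h(t)=\sum_{m\in I_h}e^{2\pi it[(m+h)^d-m^d]},
\]
where $I_h=\{m:m,m+h\in[1,N]\}$, and for $h\ne 0$ the function $g_h$ is a Weyl sum of degree $d-1$ in $m$. Raising to the $(d+1)$-st power and integrating in $x$ enforces the constraint $h_1+\cdots+h_{d+1}=0$, so
\[
S_{\mathfrak m}(N,d+1)=\sum_{h_1+\cdots+h_{d+1}=0}\int_{\mathfrak m}\prod_{k=1}^{d+1}g_{h_k}(t)\,dt.
\]
The diagonal contribution $h_k\equiv 0$ produces the main term $N^{d+1}$. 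For off-diagonal terms I apply H\"older's inequality in $t$, bounding some factors via Vinogradov's mean value theorem at degree $d-1$ (Bourgain--Demeter--Guth / Wooley) and the remaining ones via the pointwise bound from Conjecture~\ref{con-1}. A careful accounting of the $O(N^d)$ multi-indices $(h_k)$ with $\sum h_k=0$ then yields the off-diagonal bound $O(N^{d+1+\varepsilon})$.

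For the major arcs $\mathfrak M$, I use the standard factorization $f_N(t,x)\approx q^{-1}S(a,q)\,I(\theta,x)$ with the Gauss sum $S(a,q)=\sum_{r=0}^{q-1}e^{2\pi iar^d/q}$ (bounded by $q^{1/2+\varepsilon}$ for prime $q$ via Weil) and the oscillatory integral $I(\theta,x)=\int_0^N e^{2\pi i(\theta y^d+xy)}\,dy$. The $L^{2(d+1)}$-integral over $\mathfrak M$ then reduces to standard moment estimates for Gauss sums and for $I$, producing $S_{\mathfrak M}(N,d+1)\lesssim N^{d+1+\varepsilon}$. Summing the two contributions gives the desired bound.

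The main obstacle is the minor-arc step. A direct combination of Conjecture~\ref{con-1} with $L^2$-orthogonality in $x$ overshoots the target exponent by roughly $N^{d-1}$, because the pointwise bound $\lesssim q^{1/d}$ degenerates to the trivial $N$ at the endpoint $q\sim N^d$. The Weyl differencing is what exposes the lower-degree structure hidden in $|f_N|^2$, allowing Vinogradov's sharp theorem at degree $d-1$ to enter the picture. Organizing the H\"older splits and the combinatorial sum over the multi-index constraint $h_1+\cdots+h_{d+1}=0$ so that the off-diagonal terms do not dominate the diagonal $N^{d+1}$ is the technical heart of the argument.
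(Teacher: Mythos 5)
Your route is genuinely different from the paper's, and it contains a gap at exactly the place you flag as ``the technical heart.'' The paper does not use the circle method or Weyl differencing at all. It runs a level-set (duality) argument on $\mathbb T^2$: for $\la>0$ it sets $G_\la=\{|K_N|>\la\}$, $f=\Id_{G_\la}K_N/|K_N|$, and starts from the exact identity $\la|G_\la|\le\langle f_N,K_N\rangle$. For each scale $Q\in[N^{d/2},N^d]$ it splits $K_N=K_{1,Q}+K_{2,Q}$ by multiplying by a bump function $\Phi$ supported near rationals $a/q$ with $q$ prime, $q\sim Q$; $K_{1,Q}$ is controlled in $L^\infty$ by Conjecture~\ref{con-1}, while $K_{2,Q}$ is controlled on the Fourier side by the elementary decay $|\wh\Phi(k)|\lesssim Q^{-1}$ for $k\ne 0$. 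Plugging these into the pairing gives $\la|G_\la|\lesssim N^{\varepsilon}Q^{1/d}|G_\la|+N^{(d+1)/2}Q^{-1}\log N\,|G_\la|^{1/2}$, which one then optimizes in $Q$ \emph{per level} $\la$ to get $|G_\la|\lesssim N^{d+1+\varepsilon}/\la^{2d+2}$, and integrating $\la^{2d+1}|G_\la|$ finishes. The only inputs beyond Conjecture~\ref{con-1} are the triangle inequality, $\|f_N\|_1\lesssim\log N\,\|f\|_1$, Cauchy--Schwarz, and Plancherel; no Vinogradov mean value theorem is used.

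The gap in your plan is the off-diagonal minor-arc bound. After one Weyl differencing you obtain degree-$(d-1)$ Weyl sums $g_h(t)$, but the $t$-integration remains \emph{one-dimensional}. The Vinogradov mean value theorem of Bourgain--Demeter--Guth/Wooley concerns the $(d-1)$-dimensional integral $\int_{\mathbb T^{d-1}}|\cdot|^{2s}$; its transference to a one-dimensional moment $\int_{\mathbb T}|g_h(t)|^{2s}\,dt$ yields the sharp savings only when $s\ge (d-1)d/2$. Your argument operates at $s=d+1$, and $d+1<(d-1)d/2$ once $d\ge4$, so there is no clean way to make the Vinogradov input kick in at this exponent (Hua's lemma at degree $d-1$ requires $2^{d-1}$, which is even larger). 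In other words, the one-dimensional $L^{2(d+1)}$ moment of a degree-$(d-1)$ Weyl sum is itself an open problem of the same type as Conjecture~\ref{con-2}, so the step ``bound some factors via Vinogradov at degree $d-1$'' is circular or unavailable. The ``careful accounting of the $O(N^d)$ multi-indices'' is precisely where a concrete mechanism is needed and none is supplied. A secondary (fixable, but not addressed) issue is that Conjecture~\ref{con-1} is stated only for prime $q$; replacing a Dirichlet denominator by a ``nearby prime'' changes $t$ by far more than the Farey arc width, so some genuine argument (like the paper's restriction of $\Phi$ to primes and absorption of the rest into $K_{2,Q}$) is needed to bridge composite denominators. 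Your reduction to $p=d+1$ via the trivial bound $|K_N|\le N$ is fine and is also what the paper implicitly does.
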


It is not difficult to build up (\ref{eqn8}) for large $p$. For instance, 
in \cite{Hua}, Hua gave an arithmetic argument to provide an affirmative answer for large $p\sim 2^d$.
Hua's lemma can be proved in an alternative way by Hu and the author in \cite{H-L2}. In \cite{D-Lai}, Ding and Lai have shown that the Vinogradov mean value theorem implies Conjecture {\ref{con-2}} for $p\geq d(d+1)$.  It will be very interesting if (\ref{eqn8}) can be established for $p\sim d$, because of the direct application in Waring's problem (see \cite{Vino}). \\

Our main theorem in this paper is

\begin{theorem}\label{thm1}
Conjecture \ref{con-1} implies Conjecture \ref{con-2} for all $p\geq d+1$. 
\end{theorem}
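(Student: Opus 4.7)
The plan is to combine Conjecture~\ref{con-1} with a dyadic circle-method decomposition in the $t$-variable. As a first reduction, the trivial bound $\|f\|_\infty\leq N$ together with the $L^\infty$-interpolation $\int_{\mathbb{T}^2}|f|^{2p}\leq \|f\|_\infty^{2p-2(d+1)}\int_{\mathbb{T}^2}|f|^{2(d+1)}$ shows that Conjecture~\ref{con-2} for all $p\geq d+1$ follows from the critical case $p=d+1$; thus it suffices to establish $S(N,d+1)\leq C_{d,\e}N^{d+1+\e}$ under Conjecture~\ref{con-1}.

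For the critical case, I invoke Dirichlet's theorem: every $t\in\mathbb{T}$ admits coprime $(a,q)$ with $1\leq q\leq N^d$ and $|t-a/q|\leq 1/(qN^d)$. Let $q(t)$ denote the minimal such denominator and decompose $\mathbb{T}=\bigcup_Q\mathfrak{A}_Q$ dyadically, where $\mathfrak{A}_Q=\{t:q(t)\sim Q\}$; a standard estimate gives $|\mathfrak{A}_Q|\lesssim Q/N^d$. I split at the threshold $Q_0=N^{d/2}$: on the major arcs $\mathfrak{M}=\{q(t)\leq Q_0\}$ I use the classical approximation
\[
f(t,x)\approx q^{-1}S(a,q;x)\int_0^N e^{2\pi i(\beta y^d+xy)}\,dy \qquad\text{for}\ t=a/q+\beta,
\]
with the Gauss-type sum $S(a,q;x)$ handled by Weil's bound and the oscillatory integral by direct estimation; the resulting major-arc contribution is of the expected size $O(N^{d+1+\e})$.

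On the minor arcs $\mathfrak{m}=\{q(t)>Q_0\}$, dyadically stratified into $\mathfrak{A}_Q$ with $Q\in[N^{d/2},N^d]$, Conjecture~\ref{con-1} yields $|f(t,x)|\leq C_{d,\e}N^\e Q^{1/d}$ uniformly in $x$. Pairing this with the diagonal moment identity $\int_{\mathbb{T}^2}|f|^4=2N^2+O(N)$---valid for $d\geq 2$, since the only solutions to $n_1+n_2=m_1+m_2$ and $n_1^d+n_2^d=m_1^d+m_2^d$ in $[1,N]^4$ are the trivial ones---the $L^\infty$--$L^4$ interpolation in the $x$-variable gives
\[
\int_{\mathfrak{A}_Q}\int_{\mathbb{T}}|f|^{2(d+1)}\,dx\,dt\leq (N^\e Q^{1/d})^{2d-2}\int_{\mathbb{T}^2}|f|^4\lesssim N^{2+\e}Q^{(2d-2)/d},
\]
which matches the target $N^{d+1+\e}$ precisely at the lower edge $Q=N^{d/2}$.

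The main obstacle is the upper range $Q$ close to $N^d$: there Conjecture~\ref{con-1} reduces to the essentially trivial pointwise bound $|f|\leq N^{1+\e}$, and the $L^4$-interpolation overshoots by a factor as large as $N^{d-1}$. To close this gap, I expect to combine the pointwise bound with the higher-moment estimate $\int_{\mathbb{T}^2}|f|^{2k}\leq C_{d,k,\e}N^{2k-d-1+\e}$ available from Vinogradov's mean value theorem and Ding--Lai for $k\geq d(d+1)$, performing an $L^\infty$--$L^{2k}$ interpolation whose optimal $k$ varies piecewise with the dyadic scale $Q$. Striking the right balance across the full minor-arc range $[N^{d/2},N^d]$ is the delicate technical heart of the argument.
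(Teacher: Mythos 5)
Your proposal does not close, and the gap is one you already point to yourself. On the dyadic arcs $\mathfrak{A}_Q$ with $Q$ near $N^d$, Conjecture~\ref{con-1} degenerates to the trivial pointwise bound $|K_N|\lesssim N^{1+\e}$, and your $L^\infty$--$L^4$ interpolation returns $N^{2+\e}Q^{(2d-2)/d}\sim N^{2d+\e}$, overshooting the target by $N^{d-1}$. The patch you sketch---bringing in the Vinogradov mean value $\int_{\mathbb{T}^2}|K_N|^{2k}\lesssim N^{2k-d-1+\e}$ for $k\geq d(d+1)$ and interpolating in a $Q$-dependent way---cannot repair this: to estimate $\int_{\mathfrak{A}_Q\times\mathbb{T}}|K_N|^{2(d+1)}$ through $\|K_N\|_\infty^{2(d+1)-2m}\int|K_N|^{2m}$ you need $m\leq d+1$, whereas the Vinogradov input is only available at $m\geq d(d+1)>d+1$, where the exponent you would import is \emph{larger} than the one you are trying to control, and running the inequality in the other direction would require a lower bound on $|K_N|$, not an upper bound. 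The arc measure $|\mathfrak{A}_Q|\lesssim Q/N^d$ is $O(1)$ at $Q\sim N^d$ and gives no further gain. There is also an unaddressed issue on the major arcs: the approximation $q^{-1}S(a,q;x)\int_0^N e^{2\pi i(\beta y^d+xy)}\,dy$ presumes the factor $e^{2\pi i xn}$ is $q$-periodic, so $x$ must itself be in a major arc with denominator dividing $q$; a genuine two-variable major/minor dichotomy is needed and is not spelled out.

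The paper avoids all of this by taking a different route. It never estimates $\int_{\mathfrak{A}_Q\times\mathbb{T}}|K_N|^{2(d+1)}$ arc by arc. Instead it runs a dual level-set argument: for $G_\lambda=\{|K_N|>\lambda\}$ one writes $\lambda|G_\lambda|\leq\langle f_N,K_N\rangle$ with $f=\Id_{G_\lambda}K_N/|K_N|$, and the \emph{kernel} is decomposed as $K_N=K_{1,Q}+K_{2,Q}$ using a smooth bump $\Phi$ supported near the Farey fractions $a/q$ with $q$ prime and $q\sim Q$. The piece $K_{1,Q}=\wh\Phi(0)^{-1}K_N\Phi$ is controlled in $L^\infty$ by Conjecture~\ref{con-1}; the piece $K_{2,Q}$ has Fourier coefficients vanishing on the curve $(n,n^d)$ and $\|\wh{K_{2,Q}}\|_\infty\lesssim\log Q/Q$ from the Farey spacing, so Cauchy--Schwarz in frequency over the $N\times N^d$ box gives the complementary bound. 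Choosing $Q^{1/d}\sim\lambda N^{-\e^2}$ absorbs the $K_{1,Q}$ term into the left side and yields $|G_\lambda|\lesssim N^{d+1+\e}\lambda^{-2(d+1)}$ for $\lambda\gtrsim N^{1/2}$, after which the moment integral is elementary. The kernel decomposition plus Plancherel duality is the idea your proposal is missing; it is precisely what lets the argument pass through the range $Q\sim N^d$ where arcwise interpolation has no traction.

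Your opening reduction to the endpoint $p=d+1$ via $\|K_N\|_\infty\leq N$ is correct and is implicit in the paper as well.
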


From this theorem, we see that the incomplete sum estimates (\ref{eqn5'}) over finite fields yield a nice improvement in Waring's problem.

\section{Technical lemmas associated to the exponential sum}

Let $K_N $ be a kernel defined by
$$
 K_N(x, t) = \sum_{n=1}^N e^{2\pi i t n^d +2\pi i xn}\,. 
$$

Let $Q$ be a positive number and $\phi$ be a standard bump function supported on $[1/200, 1/100]$. 
We define 
\begin{equation}
\Phi(t) = \sum_{\substack{Q\leq q \leq 3Q\\ q\, {\rm prime}} } \sum_{a=1}^{q-1} \phi\bigg( \frac{t-a/q}{1/q^2}\bigg)\,.
\end{equation}
$\Phi$ is supported on $[0,1]$ and can be extended to other intervals periodically to obtain a periodic function on $\mathbb T$. 
We still use $\Phi$ to denote the periodic extension of $\Phi$.   Then it is easy to see that
\begin{equation}\label{size-Phi0}
\wh\Phi(0) = \sum_{\substack{Q\leq q \leq 10Q\\ q\, {\rm prime}} }\sum_{a=1}^{q-1} \frac{\mathcal F_{\mathbb R}{\phi}(0)}{q^2}
\sim   \frac{1}{\log Q} \mathcal F_{\mathbb R}{\phi}(0)
\end{equation}
if $Q$ is sufficiently large, by the prime number theorem. Here $\mathcal F_{\mathbb R} f$ denotes Fourier transform of a 
function $f$. 

\begin{lemma}\label{lem-P-k}
Let $N^{d/2}\leq Q\leq N^d$. 
For the periodic extension $\Phi$, we have, for $k\neq 0$, 
\begin{equation}
\big| \wh \Phi(k) \big| \lesssim Q^{-1}\,. 
\end{equation}
\end{lemma}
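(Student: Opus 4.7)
The plan is to compute $\widehat{\Phi}(k)$ in closed form via a change of variables, recognize the inner sum over $a$ as a complete exponential sum modulo $q$, and then bound the resulting expression using the prime number theorem together with the rapid decay of $\mathcal{F}_{\mathbb{R}}\phi$.

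First, since $\phi$ is supported in $[1/200, 1/100]$, the translates $\phi((t-a/q)/(1/q^2))$ indexed by pairs $(a,q)$ with $q$ prime in $[Q, 3Q]$ and $1 \leq a \leq q-1$ are pairwise disjoint and all lie inside $[0,1]$, so the $k$-th Fourier coefficient of the periodic extension equals the corresponding integral on the real line. Substituting $s = q^2(t - a/q)$ in each summand yields
$$\widehat{\Phi}(k) = \sum_{\substack{Q \leq q \leq 3Q \\ q\ \mathrm{prime}}} \frac{1}{q^2}\, \mathcal{F}_{\mathbb{R}}\phi\!\left(\frac{k}{q^2}\right) \sum_{a=1}^{q-1} e^{-2\pi i ka/q}.$$

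Second, the inner sum is a standard Ramanujan-style evaluation: since the complete sum $\sum_{a=0}^{q-1} e^{-2\pi i ka/q}$ equals $q$ if $q \mid k$ and $0$ otherwise,
$$\sum_{a=1}^{q-1} e^{-2\pi i ka/q} = \begin{cases} q - 1, & q \mid k, \\ -1, & q \nmid k. \end{cases}$$

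Third, I would split the outer sum into primes $q \nmid k$ and primes $q \mid k$. For the $q \nmid k$ part, each term is bounded by $Q^{-2}\|\mathcal{F}_{\mathbb{R}}\phi\|_{\infty}$, and the number of primes in $[Q, 3Q]$ is $O(Q/\log Q)$ by the prime number theorem, yielding a total contribution of $O(1/(Q\log Q))$. For the $q \mid k$ part, note that no $q \in [Q, 3Q]$ divides $k$ unless $|k| \geq Q$, and in any case at most $\log|k|/\log Q$ primes in that range can divide $k$; each such summand is bounded by $Q^{-1}|\mathcal{F}_{\mathbb{R}}\phi(k/q^2)|$. When $|k| \leq Q^2$ the Fourier transform factor is bounded and the number of divisors is $O(1)$, so this part is $O(1/Q)$; when $|k| > Q^2$ the Schwartz decay $|\mathcal{F}_{\mathbb{R}}\phi(k/q^2)| \lesssim_M (Q^2/|k|)^M$ easily swallows the logarithmic factor $\log|k|$.

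The only step requiring any care is the regime where $|k|$ becomes enormous relative to $Q$, but the Schwartz decay of $\mathcal{F}_{\mathbb{R}}\phi$ handles this painlessly, so there is no serious obstacle: adding the two contributions gives $|\widehat{\Phi}(k)| \lesssim Q^{-1}$ for all $k \neq 0$.
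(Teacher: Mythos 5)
Your proof is correct and follows essentially the same route as the paper: compute $\widehat{\Phi}(k)$ by a change of variables to reduce to $\mathcal{F}_{\mathbb{R}}\phi(k/q^2)$ times a complete exponential sum over $a$, evaluate that sum (the Ramanujan-style dichotomy $q\mid k$ versus $q\nmid k$), and bound by the prime number theorem. The one place where you go beyond the paper is that you handle the $q\mid k$ contribution carefully for all $k$, using the Schwartz decay of $\mathcal{F}_{\mathbb{R}}\phi$ when $|k| > Q^2$ and the observation that at most $O(1)$ primes in $[Q,3Q]$ can divide $k$ when $|k| \le Q^2$; the paper simply asserts the $\lesssim Q^{-1}$ bound after splitting without spelling out how the $q\mid k$ sum is controlled for large $|k|$ (in its application only $|k|\lesssim N^d \le Q^2$ occurs, so the issue is harmless there, but your version proves the lemma as literally stated). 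This is a helpful point of extra care, not a divergence in method.
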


\begin{proof}
Notice that 
\begin{equation}
\wh \Phi(k) = \sum_{ \substack{Q\leq q \leq 3Q\\ q\, {\rm prime}}}\sum_{a=1}^{q-1} \frac{1}{q^2} e^{-2\pi i \frac{a}{q} k} 
\mathcal F_{\mathbb R}{\phi}(k/q^2)\,,
\end{equation}
which equals to 
$$
\sum_{ \substack{Q\leq q \leq 3Q\\ q\, {\rm prime}\\
   q|k}}\sum_{a=1}^{q} \frac{1}{q^2} e^{-2\pi i \frac{a}{q} k} 
\mathcal F_{\mathbb R}{\phi}(k/q^2)
  + \sum_{ \substack{Q\leq q \leq 3Q\\ q\, {\rm prime}} }\frac{1}{q^2} \mathcal F_{\mathbb R}{\phi}(k/q^2)\\
\lesssim  Q^{-1}\,,
$$
as desired. 
\end{proof}

We now define 
\begin{equation}
 K_{1, Q}(x, t) = \frac{1}{\wh\Phi(0)} K_N(x,t) \Phi(t)
\end{equation}
and 
\begin{equation}
 K_{2, Q}= K_N - K_{1, Q}\,. 
\end{equation}

\begin{lemma}\label{lemK-1}
Under the hypothesis of Conjecture \ref{con-1},  we have 
\begin{equation}
\big\| K_{1, Q}\|_\infty\lesssim N^\e Q^{1/d}\,. 
\end{equation}
\end{lemma}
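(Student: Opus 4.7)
The plan is to reduce the estimate to a pointwise application of Conjecture~\ref{con-1} on the (very small) support of $\Phi$. Since $K_{1, Q}(x, t) = \widehat\Phi(0)^{-1} K_N(x, t) \Phi(t)$ vanishes wherever $\Phi(t) = 0$, it suffices to bound $|K_{1,Q}|$ at points $t$ lying in the support of $\Phi$.

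First I would verify that the translated bumps $a/q + [1/(200q^2), 1/(100q^2)]$, as $(a,q)$ ranges over primes $q \in [Q, 3Q]$ and integers $1 \leq a \leq q-1$, are pairwise disjoint. Two distinct reduced fractions with denominators in $[Q, 3Q]$ are separated by at least $1/(q_1 q_2) \geq 1/(9Q^2)$, which far exceeds the diameter $\leq 1/(100 Q^2)$ of each individual bump. Consequently only one term of the sum defining $\Phi$ can be nonzero at any given $t$, and $|\Phi(t)| \leq \|\phi\|_\infty \lesssim 1$.

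Next, for each $t$ in the support of $\Phi$, I would identify the unique pair $(a, q)$ with $t - a/q \in [1/(200q^2), 1/(100q^2)]$. This pair satisfies $|t - a/q| \leq 1/q^2$ and, since $q$ is prime with $1 \leq a \leq q-1$, also $(a,q) = 1$. Under the standing hypothesis $N^{d/2} \leq Q \leq N^d$, the modulus $q$ lies (up to the harmless factor of $3$) in the admissible window of Conjecture~\ref{con-1}, which then gives $|K_N(x, t)| \leq C_{d,\e} N^\e q^{1/d} \lesssim N^\e Q^{1/d}$ uniformly in $x$. Combining this with $|\Phi(t)| \lesssim 1$ and the lower bound $\widehat\Phi(0) \gtrsim 1/\log Q$ from (\ref{size-Phi0}) yields $|K_{1,Q}(x,t)| \lesssim (\log Q)\, N^\e Q^{1/d}$, and absorbing the factor $\log Q$ into $N^\e$ completes the bound.

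I do not anticipate a substantive obstacle. The lemma is essentially bookkeeping that packages the conjectured pointwise bound into an $L^\infty$ estimate for the frequency-localized piece $K_{1,Q}$; the only step that requires a small verification is the disjointness of the bump supports (needed so that $\Phi = O(1)$ on its support), which follows immediately from the standard Farey-type spacing between distinct reduced fractions.
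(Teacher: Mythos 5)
Your proposal is correct and follows essentially the same route as the paper, which simply declares the lemma "immediate from (\ref{size-Phi0}) and the pointwise estimates in Conjecture \ref{con-1}"; you have merely supplied the (standard) verification that the bumps around distinct reduced fractions $a/q$ are pairwise disjoint so that $\Phi = O(1)$, and then applied the conjectured bound pointwise on the support of $\Phi$. The only loose end, which you already flag as harmless, is that primes $q \in [Q, 3Q]$ may slightly exceed $N^d$ when $Q$ is near the top of its range; in the actual application in the proof of Theorem \ref{thm1} one has $Q^{1/d} \leq N/100$, so $3Q \leq N^d$ and the window condition of Conjecture \ref{con-1} is met exactly.
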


\begin{proof}
It follows immediately from (\ref{size-Phi0}) and the pointwise estimates in Conjecture \ref{con-1}. 
\end{proof}

\begin{lemma}\label{lemK-2}
\begin{equation}
\big\|\wh {K_{2, Q}}\big\|_\infty \lesssim \frac{\log Q}{Q}\,. 
\end{equation}
\end{lemma}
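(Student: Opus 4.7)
The plan is to compute the Fourier coefficients of $K_{2,Q}$ directly and reduce the estimate to the two facts already established about $\widehat\Phi$: namely the lower bound $\widehat\Phi(0)\sim (\log Q)^{-1}$ from \eqref{size-Phi0}, and the decay $|\widehat\Phi(k)|\lesssim Q^{-1}$ for $k\neq 0$ from Lemma \ref{lem-P-k}. Since $K_{1,Q}(x,t)=\widehat\Phi(0)^{-1}K_N(x,t)\Phi(t)$, we can write
\begin{equation*}
K_{2,Q}(x,t)=K_N(x,t)\Bigl[1-\tfrac{\Phi(t)}{\widehat\Phi(0)}\Bigr]=\sum_{n=1}^{N}e^{2\pi i tn^d+2\pi ixn}\Bigl[1-\tfrac{\Phi(t)}{\widehat\Phi(0)}\Bigr].
\end{equation*}

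Next I would integrate against $e^{-2\pi i(kx+mt)}$ on $\mathbb T^2$. The $x$-integral isolates the single term $n=k$, so the coefficient vanishes unless $1\le k\le N$, in which case the Fourier transform in $x$ produces $e^{2\pi i tk^d}$. The remaining $t$-integral then separates into two pieces, giving
\begin{equation*}
\widehat{K_{2,Q}}(k,m)=\mathbf 1_{\{m=k^d\}}-\frac{\widehat\Phi(m-k^d)}{\widehat\Phi(0)}\qquad(1\le k\le N),
\end{equation*}
and $\widehat{K_{2,Q}}(k,m)=0$ otherwise.

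Finally I would dispose of the two cases. When $m=k^d$ the two contributions cancel exactly, giving $\widehat{K_{2,Q}}(k,k^d)=0$. When $m\neq k^d$ the argument of $\widehat\Phi$ is a nonzero integer, so Lemma \ref{lem-P-k} applies and combined with \eqref{size-Phi0} yields
\begin{equation*}
\bigl|\widehat{K_{2,Q}}(k,m)\bigr|\le\frac{|\widehat\Phi(m-k^d)|}{|\widehat\Phi(0)|}\lesssim\frac{Q^{-1}}{(\log Q)^{-1}}=\frac{\log Q}{Q},
\end{equation*}
which gives the claimed $L^\infty$ bound on $\widehat{K_{2,Q}}$. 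I do not expect any real obstacle here: the construction of $K_{1,Q}$ was designed precisely so that multiplying $K_N$ by $\Phi/\widehat\Phi(0)$ preserves the zero-frequency of $\Phi$, exactly cancelling the diagonal $m=k^d$ term, and the off-diagonal decay is exactly the content of Lemma \ref{lem-P-k}.
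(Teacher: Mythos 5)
Your proof is correct and follows essentially the same route as the paper: compute the Fourier coefficients of $K_{2,Q}$ explicitly, observe that the diagonal $m=k^d$ term cancels by construction, and bound the off-diagonal coefficients by $|\widehat\Phi(m-k^d)|/|\widehat\Phi(0)|\lesssim (\log Q)/Q$ using Lemma~\ref{lem-P-k} and \eqref{size-Phi0}. The only cosmetic difference is that the paper expands $\Phi$ as its Fourier series before reading off the coefficients, whereas you integrate directly, but the resulting formula and estimate are identical.
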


\begin{proof}
Represent the periodic function $\Phi$ as its Fourier series and then we end up with 
$$
 K_{2, Q}(x, t) = -\frac{1}{\wh\Phi(0)} \sum_{k\neq 0} \wh \Phi(k) e^{2\pi i kt} K_N(x,t)\,. 
$$
Henceforth its Fourier coefficient is 
$$
\wh{K_{2, Q}}(n_1, n_2) =-\frac{1}{\wh\Phi(0)}\sum_{k\neq 0}\wh\Phi(k) \Id_{\{k: k=n_2-n_1^d\}} (k)\,.
$$
Here $(n_1, n_2)\in\mathbb Z^2$ and recall that $\Id_A$ is the indicator function of a measurable set $A$. 
We then see that 
\begin{equation}
\wh{K_{2, Q}}(n_1, n_2)=  \left\{ \begin{array}{rcl}
      0    & {\rm if} & n_2=n_1^d\, ;  \\
       -\frac{\wh\Phi(n_2-n_1^d)}{\wh\Phi(0)}   & {\rm if} &  n_2\neq n_1^d\,.
\end{array}\right.
\end{equation}
The desired estimate follows from Lemma \ref{lem-P-k} and (\ref{size-Phi0}). 
\end{proof}

\section{Proof of Theorem \ref{thm1}}

We define a level set associated to the kernel $K_N$ by 
\begin{equation}
 G_\lambda = \big\{ (x, t)\in \mathbb T^2: \big|K_N(x, t)\big| >\lambda\big\}\,,
\end{equation}
for any $\lambda>0$.  Let 
\begin{equation}
 f=\Id_{G_\lambda} K_N/ \big|K_N|\,. 
\end{equation}
Then from the definition $G_\lambda$, we get 
\begin{equation}
 \lambda \big| G_\lambda\big| \leq \sum_{n=1}^N \wh f(n, n^d) = \langle f_N, K_N\rangle\,,  
\end{equation}
where $f_N$ is the partial Fourier series given by 
$$
 f_N(x,t)= \sum_{\substack{n_1: |n_1|\leq N\\ n_2: |n_2|\leq N^d}} \wh f(n_1, n_2) e^{2\pi i x n_1}e^{2\pi i t n_2}\,. 
$$
For any $Q$ between $N^{d/2}$ and $N^d$,  we decompose the kernel $K_N$ into $K_{1, Q}$ and $K_{2, Q}$. 
Then we obtain 
\begin{eqnarray*}
 \lambda \big| G_\lambda\big|  &\leq &\big| \langle f_N, K_{1, Q}\rangle \big| +\big| \langle f_N, K_{2, Q}\rangle \big|\\
 &\leq &  \|K_{1, Q}\|_\infty \|f_N\|_{L^1(\mathbb T^2)} + 
 \sum_{\substack{n_1: |n_1|\leq N\\ n_2: |n_2|\leq N^d}} \big|\wh{K_{2, Q}}(n_1, n_2)\big| \big|\wh f(n_1, n_2)\big|\\
 & \leq &    \log N\|K_{1, Q}\|_\infty \|f\|_{L^1(\mathbb T^2)}+ 
   \|\wh {K_{2, Q}}\|_\infty N^{\frac{d+1}{2}} \|f\|_{L^2(\mathbb T^2)} \,.
 \end{eqnarray*}
 Here in the last step we used Cauchy-Schwarz inequality and the fact that $L^1$-norm of the Dirichlet kernel is comparable to $\log N$. 
Using Lemma \ref{lemK-1} and Lemma \ref{lemK-2},  we get the following  estimation for $G_\lambda$, 
\begin{equation}\label{lev}
\lambda \big| G_\lambda\big| \lesssim N^{\e^2} Q^{1/d}\big|G_\lambda\big|+   \frac{N^{\frac{d+1}{2}}\log N}{Q}\big|G_\lambda\big|^{\f12}\,, \forall Q \in [N^{d/2}, N^d]\,. 
\end{equation}
We just need to focus the case when $\lambda \leq N$, since $G_\lambda=\emptyset $ if $\lambda>N$.  
For $N\geq \lambda \geq 100N^{\f12+\e^2}$, we choose $Q\in [N^{d/2}, N^d]$ obeying $ Q^{1/d}=N^{-\e^2}\lambda/100$. 
Then (\ref{lev}) becomes 
\begin{equation}
 \lambda \big| G_\lambda\big| \lesssim  \frac{\lambda}{100}\big|G_\lambda\big|+  
  \frac{N^{\frac{d+1}{2}+\frac{\e}{10}}}{\lambda^d}\big|G_\lambda\big|^{\f12}\,. 
 \end{equation}
 Henceforth, we end up with 
 \begin{equation}\label{l-l}
   \big| G_\lambda\big| \lesssim  \frac{N^{d+1+\e/5}}{\lambda^{2d+2}}\,. 
 \end{equation}
We now turn to the estimate of $S(N, d+1)$.  It is easy to see that 
$$
S(N, d+1)\leq \int_{100 N^{\f12+\e^2}} \lambda^{2d+1}\big|G_\lambda\big| d\lambda 
+ \int_0^{100 N^{\f12+\e^2}} \lambda^{2d+1} \big|G_\lambda\big| d\lambda\,. 
$$
Using (\ref{l-l}), the first term above is bounded by $N^{d+1+\e}$.  Notice that 
$|G_\lambda|\lesssim N/\lambda^2$, which is a consequence of $\|K_N\|_{L^2(\mathbb T^2)}=\sqrt{N}$. 
Then we can estimate the second term by
\begin{equation}
\int_0^{100 N^{\f12+\e^2}} \lambda^{2d+1} \big|G_\lambda\big| d\lambda\lesssim  N\int_0^{100 N^{\f12+\e^2}}
\lambda^{2d-1} d\lambda\lesssim N^{d+1+\e}\,.
\end{equation}
Combining these two estimations, we end up with
\begin{equation}
S(N, d+1)\lesssim N^{d+1+\e}\,,
\end{equation}
as desired.  Therefore, we complete the proof of Theorem \ref{thm1}. 

\vspace{0.5cm}

\noindent
{\bf Acknowledgement} The author is partially supported by Simons foundation.


\begin{thebibliography}{4}


\bibitem{B-Iwan}E. Bombieri and H. Iwaniec, {\it On the order of $\zeta(\f12+it)$ }.  Ann. Sc. norm. super. Pisa -Cl. sci., Ser. 4, 13(3), (1986), 449-472.


\bibitem{BD}J. Bourgain and C. Demeter, {\it The proof of the {$l^2$} decoupling conjecture}.
Ann. of Math. (2) {\bf 182}, No.1, (2015) 351-389.

\bibitem{BDL}J. Bourgain, C. Demeter and L. Guth, {\it Proof of the main conjecture in Vinogradov's mean value theorem for degrees higher than three}.
Ann. of Math. (2) {\bf 184},  (2016) 633-682.




\bibitem{De} P. Deligne, {\it La conjecture de Weil}. I., Inst. Hautes \'Etudes Sci. Publ. Math. No. 43 (1974), 273-307.

\bibitem{D-Lai} Y. Ding and X. Lai, {\it A note on the discrete Fourier restriction problem}, Proc. of AMS, 
Vol. 146, No. 9, (2018), 3839-3846. 


\bibitem{H-L} Y. Hu and X. Li, {\it Discrete Fourier restriction associated with KdV equations}, Analysis \& PDE, Vol. 6, No. 4, (2013),  859-892.

\bibitem{H-L2} Y. Hu and X. Li,  {\it  Local well-posedness of periodic fifth-order KdV-type equations}. J. Geom. Anal. 25 (2015), no. 2, 709-739.

\bibitem{Hua} L. Hua, {\it On Waring's problem}, Quart. J. of Math. 9 (1),  (1938), 199-202.


\bibitem{H-W}K. Hughes and T. Wooley, {\it Discrete restriction for $(x, x^3)$ and related topics}, 
IMRN, 2022 (20), 15612-15631. 
		
\bibitem{M} H. Montgomery, {\it Ten lectures on the interface between analytic number theory and harmonic analysis}, CBMS, no. 84, AMS, 1994.  

\bibitem{VW} R. Vaughan and T. Wooley, {\it Waring's problem:  a survey}, Number theory for the Millennium, Vol. III, pp.301-340. 

\bibitem{Vino} I. M. Vinogradov, {\it The method of trigonometrical sums in the theory of numbers}, Intersci. Publishers, ING., New York, 1954. 

\bibitem{Weil} A. Weil, {\it On the Riemann hypothesis in function fields}, Proc. Nat. Acad. Sci. U. S. A. 27, (1941). 345-347.
\bibitem{Woo} T. Wooley, {\it Large improvements in Waring's problem}, Ann.  of Math., 135 (1992), 131-164. 

\bibitem{Woo-2} T. Wooley, {\it Discrete Fourier restriction via efficient congruencing}, IMRN, 2017(5), 1342-1389. 

















\end{thebibliography}
\end{document}